\def\N{{\mathbb N}}
\def\Z{{\mathbb Z}}
\def\cP{\mathcal P}
\def\cT{\mathcal T}
\def\cF{\mathcal F}
\def\cO{\mathcal O}
\def\cM{\mathcal M}
\def\cP{\mathcal P}
\def\cK{\mathcal K}
\def\bR{\bold R}
\def\sX{\mathscr X}
\def\sY{\mathscr Y}
\theoremstyle{plain}
\newtheorem{theorem}{Theorem}[section]
\newtheorem{proposition}[theorem]{Proposition}
\newtheorem{corollary}[theorem]{Corollary}
\newtheorem{lemma}[theorem]{Lemma}
\theoremstyle{definition}
\newtheorem{defi}[theorem]{Definition}
\newtheorem{remark}[theorem]{Remark}
\begin{document}

\title{Descent properties of equivariant K-theory}
\author{Christian Serp\'{e}}
\address{University of M\"unster\\ Germany}
\email{serpe@uni-muenster.de}

\date{\today}

\begin{abstract}
  We show that equivariant $K$-theory satisfies descent with respect to the isovariant Nisnevich topology. The main step is to show that the isovariant Nisnevich topology is a regular, complete and bounded cd topology.
\end{abstract}

\maketitle



\tableofcontents

\section{Introduction}

The Zariski descent property for (non equivariant) K-theory was first
shown by Brown and Gersten in \cite{BrownGersten}. The case of Nisnevich descent first appeared in \cite{Nisnevich} and \cite{thomason_trobaugh}. In this article we want to investigate what happens in the equivariant situation. That means we consider schemes with a $G$-action for a fixed group scheme $G$ and consider the equivariant $K$-theory of the scheme. One notices first that for the topology defined by equivariant Nisnevich coverings, i.e. equivariant morphisms which are Nisnevich covers when we forget the $G$-action, the descent property is not satisfied (compare Remark \ref{counterexample}). Instead we consider the isovariant Nisnevich topology. Covers are equivariant Nisnevich covers which are isovariant, i.e. where the isotropy groups do not change (see Definition \ref{defi:isovariant}). The main result is that equivariant $K$-theory satisfies descent with respect to the isovariant Nisnevich topology. Isovariant topologies were first considered by Thomason in \cite{MR948534} for the \'etale topology. In loc. cit. Thomason showed  that equivariant $K$-theory with inverted Bott element satisfies descent with respect to the isovariant \'etale topology. To proof our main theorem we use the general formalism of cd structure of Voevodsky from \cite{Voecdstructures} and \cite{VoeNiscdhtopologies}.

\vspace{0.3cm} 

In the second section we introduce the isovariant  Nisnevich topology and show that for  schemes with scheme-theoretical geometric quotients  this topology is isomorphic to the Nisnevich topology of the quotient. This works similar to the \'etale case in \cite{MR948534}. We further show that the isovariant Nisnevich topology can be defined by a regular, complete and bounded cd structure.

In section three we recollect descent properties for topologies which are defined by regular, complete and bounded cd structures. We mainly use here the work \cite{Voecdstructures} and \cite{Blander}.

In section three we proof our main theorem that equivariant $K$-theory satisfies descent in the isovariant Nisnevich topology. 

We would like to thank Gereon Quick for useful discussions and comments.
 
\section{The isovariant Nisnevich topology}

Let $S$ be a separated noetherian scheme and $G$ a flat $S$-group schemes of finite type. All schemes are supposed to be schemes of finite type over $S$ and all fibre products are supposed to be over $S$ if not otherwise mentioned. First we recall some notations of equivariant scheme theory needed in this paper. We refer to \cite{MR948534}[\S2] for further details.

\begin{defi}
If a group scheme $G$ acts on a scheme $X$ we denote by $G_X$ the
{\em isotropy subgroup}, defined by the pullback
\begin{displaymath}
\xymatrix{
G_X\ar[r] \ar[d] & G\times X \ar[d] &(g,x)\ar@{|->}[d] \\
X\ar[r]^{\Delta_X} & X\times X & (gx,x).
}
\end{displaymath}
\end{defi}

\begin{defi}
If a group scheme $G$ acts on a scheme $X$, a morphism $\pi:X\rightarrow X/G$ is called a {\em scheme-theoretic geometric quotient} if $\pi$ is flat and represents the fppf quotient of $X$ by $G$.
\end{defi}

Important for what follows is the following proposition, which says that locally scheme-theoretic geometric quotients do exist.
\begin{proposition}[\cite{MR860673}Prop. 4.7]\label{genquotient}
Let $X$ be an reduced scheme with an $G$-action. Then there is an open dense $G$ invariant subset $U\subset X$ such that the scheme-theoretic geometric quotient of $U$ by $G$ exists.
\end{proposition}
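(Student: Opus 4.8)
The plan is to follow Rosenlicht's classical construction of a generic orbit space, and then to upgrade the resulting set-theoretic quotient to the scheme-theoretic (fppf) one demanded by the definition.

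First I would reduce to the case that $X$ is integral. Since $X$ is reduced, the locus of points lying on a single irreducible component is open and dense, and it is $G$-invariant because $G$ permutes the irreducible components and hence their pairwise intersections. Replacing $X$ by this locus we may assume $X$ is a disjoint union of integral schemes which $G$ permutes within each orbit; the quotient of such an orbit of components by $G$ agrees with the quotient of a single component by the stabilizer of that component, so after shrinking $G$-equivariantly it suffices to treat an integral $X$. Next I would use generic flatness to control the infinitesimal data of the action. The isotropy morphism $G_X\to X$ of the first definition is $G$-equivariant for the conjugation-twisted action, so its flat locus is open and $G$-invariant; by generic flatness this locus is a dense open $U_1\subset X$. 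Over $U_1$ the isotropy group schemes fit into a flat family and do not jump, which is exactly what is needed to exclude infinitesimal pathologies in the fibres of the prospective quotient.

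The heart of the argument is Rosenlicht's construction at the generic point. Passing to the field $k(X)^G$ of $G$-invariant rational functions, which by Rosenlicht is the function field of a variety $Y$, the inclusion $k(Y)=k(X)^G\hookrightarrow k(X)$ spreads out, after shrinking $U_1$ to a dense open $G$-invariant $U$, to a dominant morphism $\pi\colon U\to Y$ whose fibres are precisely the $G$-orbits. Shrinking once more I may assume $\pi$ is faithfully flat. It then remains to verify that $\pi$ represents the fppf quotient, i.e.\ that the canonical morphism
\[
G\times_S U \longrightarrow U\times_Y U,\qquad (g,u)\mapsto (gu,u),
\]
is an isomorphism of schemes. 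By construction it is a bijection on points, and the flatness of $G_X\to U$ arranged above, combined with faithfully flat descent along $\pi$, promotes it to a scheme isomorphism.

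The main obstacle is exactly this last scheme-theoretic step. Rosenlicht's theorem only produces a quotient on the level of points and function fields, whereas the definition insists that $\pi$ be flat and represent the fppf sheaf quotient, which forces the orbit morphism $G\times_S U\to U\times_Y U$ to be an honest isomorphism of schemes rather than a mere bijection. Keeping the possibly non-reduced isotropy group scheme $G_X$ under control — ensuring its flatness over $U$ so that no infinitesimal discrepancy survives in the fibres — is where the real work lies; everything else reduces to shrinking $U$ in a $G$-invariant way and repeated applications of generic flatness.
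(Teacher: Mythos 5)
The paper offers no proof of this proposition at all: it is imported wholesale from Thomason's paper (the citation [Tho86, Prop.\ 4.7] in the statement), so your sketch stands or falls on its own merits --- and it falls at what you yourself call the heart of the argument. You assert that for $\pi\colon U\to Y$ to represent the fppf quotient, the orbit morphism $G\times_S U\to U\times_Y U$, $(g,u)\mapsto(gu,u)$, must be an isomorphism of schemes. That is not the fppf-quotient condition; it is the definition of $U\to Y$ being a $G$-torsor, i.e.\ of the action being free. The fppf quotient is the coequalizer of $G\times_S U\rightrightarrows U$ in the category of fppf sheaves, and a $G$-invariant, flat, surjective, finite-type $\pi$ represents it if and only if the orbit morphism is an \emph{epimorphism} of fppf sheaves; its fibres are torsors under the isotropy groups and need not be trivial. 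Since the proposition assumes nothing about freeness, the condition you are trying to verify is unattainable in general: for the trivial action of a nontrivial $G$ on $X$, every open $U$ is invariant, the fppf quotient of $U$ is $U$ itself (so the proposition holds trivially with $\pi=\mathrm{id}$), yet the orbit map is the projection $G\times_S U\to U$, which is never an isomorphism. The same obstruction arises for any action with generic stabilizers or with kernel, and it also defeats your preliminary claim that the orbit map is ``a bijection on points'': its point-fibres are cosets of the stabilizers. No shrinking of $U$, flatness of $G_X$, or descent argument can repair a statement that is false.

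Independently of this, Rosenlicht's theorem is the wrong tool for the stated generality. The standing hypotheses of the paper are a separated noetherian base $S$ and a flat $S$-group scheme $G$ of finite type; $G$ may be non-smooth (e.g.\ infinitesimal in characteristic $p$) and $X$ need not be a variety over a field, so there is no function field of invariants in the classical sense, and orbit-of-points reasoning is empty. For instance, for $\mu_p$ acting on $\mathbb{G}_m$ in characteristic $p$ the action is free, every set-theoretic orbit is a single point, and the fppf quotient is the $p$-power map $\mathbb{G}_m\to\mathbb{G}_m$ --- a bijection on points that is not an isomorphism, invisible to any argument phrased in terms of orbits of points. Thomason's proof proceeds quite differently: generic flatness is applied to the isotropy group scheme $G_X\to X$ (this much matches your instinct), the quotient by the resulting flat equivalence relation is shown to exist as an algebraic space by representability theorems of Artin type, and one then shrinks $G$-invariantly to a dense open over which this algebraic space is a scheme. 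At no point is the orbit map required to be more than an fppf covering.
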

\begin{defi}\label{defi:isovariant}
A  morphism $f:X\rightarrow Y$ between two schemes $X,Y$ with a $G$-action is
called {\em isovariant} if it is equivariant and if the following diagram is cartesian
\begin{displaymath}
\xymatrix{
G_X \ar[d]\ar[r] & G_Y \ar[d] \\
X\ar[r] &Y.
}
\end{displaymath}
\end{defi}

\begin{defi}
A family of morphisms $\{X_i\rightarrow X\}$ of $G$-schemes is called an {\em isovariant Nisnevich cover} if each morphism $X_i\rightarrow X$ is isovariant and $\{X_i\rightarrow X\}$ is a Nisnevich cover if we forget the $G$-actions. We denote by $iso-Nis$ the induced topology on various categories. For a $G$-scheme $X$ we denote by $iso-Nis/X$ the category of morphism $Y\rightarrow X$ where $Y$ is also a $G$-scheme and where the morphism is isovariant and Nisnevich if we forget the $G$-action. 
\end{defi} 

\begin{remark}
  Let $X$ be a $G$-scheme and $U\subset X$ be an open subset of
  $X$. Then the inclusion $U\hookrightarrow X$ is isovariant if and
  only if $U$ is a $G$-invariant open subset. So the small isovariant
  Zariski site is equal to the small equivariant Zariski site. But in
  general this is not true. Consider for example a scheme $X$ with the
  trivial action of the group $\Z/2$ and $X\coprod X$ with the $\Z/2$
  action that switches the components. Then the obvious morphism
  $X\coprod X\rightarrow X$ is equivariant and a Zariski cover, but
  the morphism is not  a cover in the isovariant topology. This example already reveals the reason why we have to consider the isovariant topology for equivariant K-theory. We postpone the reader to Remark \ref{counterexample} for a more detailed explanation. 
\end{remark}

For a scheme $Y$ we denote by $Nis/Y$ the category of morphisms $U\rightarrow T$ that are Nisnevich.

\begin{proposition}\label{sitesequi}
  Let $X$ be a $G$-scheme with a scheme-theoretical geometric quotient $\pi:X\rightarrow X/G$. Then we have the following equivalences of sites:
\begin{displaymath}
  Nis/(X/G)\rightarrow iso-Nis/X \text{, } U\hookrightarrow (X/G) \mapsto U\times_{X/G}X\rightarrow X
\end{displaymath}
Here $G$ acts on $U\times_{X/G}X$ via the second factor.
\end{proposition}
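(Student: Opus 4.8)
The plan is to exhibit the pullback functor as one half of an equivalence whose quasi-inverse is passage to the quotient, with fppf descent along $\pi$ serving as the bridge. The decisive structural fact is that, since $\pi:X\rightarrow X/G$ represents the fppf quotient, the projection $X\times_{X/G}X\rightarrow X$ is identified with $G\times X\rightarrow X$ via $(g,x)\mapsto(gx,x)$; in particular $\pi$ is faithfully flat of finite type, so fppf descent applies. This mirrors the \'etale treatment in \cite{MR948534}.

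First I would check that the functor is well defined, i.e.\ that $U\times_{X/G}X\rightarrow X$ lies in $iso\text{-}Nis/X$ for every Nisnevich $U\rightarrow X/G$. The Nisnevich property is immediate, since Nisnevich morphisms are stable under arbitrary base change and here we base change along $\pi$. For isovariance, with $G$ acting through the second factor, a direct computation of the isotropy subgroup of $Y:=U\times_{X/G}X$ gives $G_Y\cong G_X\times_X Y$: a point $(g,(u,x))$ lies in $G_Y$ exactly when $(u,gx)=(u,x)$, and as the $U$-coordinate is fixed this says precisely $(g,x)\in G_X$. Hence the isotropy square over $X$ is cartesian, which is the definition of isovariance.

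The core of the argument is essential surjectivity together with the construction of the quasi-inverse. Given an isovariant Nisnevich $f:Y\rightarrow X$, I would descend $Y$ along $\pi$ using the $G$-action as descent datum. Under the identification $X\times_{X/G}X\cong G\times X$ the two pullbacks of $Y$ both become $G\times Y$, with structure maps through the action and through the projection respectively, and the action morphism $G\times Y\rightarrow Y$ is exactly an isomorphism between them; associativity and unitality of the action translate into the cocycle and normalization conditions of a descent datum. Since \'etale morphisms satisfy effective fppf descent, this datum produces a scheme $\bar Y\rightarrow X/G$ together with an isomorphism $\bar Y\times_{X/G}X\cong Y$ of $G$-schemes; I set $Y/G:=\bar Y$. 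Faithfully flat descent of morphism properties then shows $\bar Y\rightarrow X/G$ is Nisnevich, and the recovered $G$-action on $\bar Y\times_{X/G}X$ is the second-factor action, so the two functors are mutually inverse. This is where isovariance is essential: it forces $G_Y\cong G_X\times_X Y$, which is precisely the condition guaranteeing that the action descent datum yields a genuine geometric quotient with cartesian square; the switching example of the preceding remark shows that without isovariance the pullback functor fails to be essentially surjective.

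Finally, to upgrade this equivalence of categories to an equivalence of sites, I would check that coverings correspond. Full faithfulness is a restatement of fppf descent for morphisms along $\pi$. A Nisnevich cover $\{U_i\rightarrow U\}$ pulls back to a family $\{U_i\times_{X/G}X\rightarrow U\times_{X/G}X\}$ that is Nisnevich and jointly surjective, hence an isovariant Nisnevich cover; conversely, since $\pi$ is faithfully flat, a family becomes a cover after pullback along $\pi$ if and only if it was one already, so the quasi-inverse carries isovariant Nisnevich covers back to Nisnevich covers. The step I expect to be the main obstacle is the descent itself: verifying effectivity and, beyond mere \'etaleness, checking that the Nisnevich residue-field condition descends along $\pi$, together with the careful matching of the $G$-action to a cocycle descent datum under the isomorphism $X\times_{X/G}X\cong G\times X$.
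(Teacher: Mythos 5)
Your proposal contains two genuine gaps, of different severity. The first is the structural claim on which your entire construction of the quasi-inverse rests: that ``since $\pi:X\rightarrow X/G$ represents the fppf quotient, the projection $X\times_{X/G}X\rightarrow X$ is identified with $G\times X\rightarrow X$ via $(g,x)\mapsto(gx,x)$''. This identification holds if and only if the action is free, i.e.\ if and only if $\pi$ is a $G$-torsor, and the proposition assumes nothing of the sort: a scheme-theoretic geometric quotient is only required to be flat and to represent the fppf quotient sheaf. For the trivial action of a nontrivial $G$ one has $X/G=X$ and $X\times_{X/G}X=X\neq G\times X$; in general $G\times X\rightarrow X\times_{X/G}X$ is only an fppf epimorphism, its failure to be a monomorphism being measured by the stabilizer group scheme $G_X$. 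Consequently the $G$-action on $Y$ is not literally a descent datum for $\pi$, and your cocycle argument never gets started in the stated generality. A repair would have to descend the action isomorphism along the epimorphism $G\times X\rightarrow X\times_{X/G}X$, and it is exactly there that isovariance is needed (it says that stabilizers act trivially on the fibres of $Y\rightarrow X$); you invoke isovariance, but not through any mechanism that actually uses it. The paper's own proof sidesteps descent data entirely: it forms $W/G$ directly as the fppf quotient, a priori only an algebraic space, uses isovariance to show that the square formed by $W$, $X$, $W/G$, $X/G$ is cartesian, descends \'etaleness along the fppf cover $\pi$, and invokes \cite{KnAlgSpc}[Cor.\ 6.17] to conclude that $W/G$ is a scheme.

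The second gap is the step you yourself flag as the expected main obstacle, namely that ``a family becomes a cover after pullback along $\pi$ if and only if it was one already''. For the Nisnevich condition this is false: complete decomposition (the residue-field condition) is preserved, but not reflected, by faithfully flat base change. Concretely, take $G=\Z/2$, let $k'/k$ be a quadratic Galois extension (say $\Q(i)/\Q$), and let $X=\spec{k'}$ with the Galois action; this action is free, so $X/G=\spec{k}$ and here even your torsor identification is valid. Put $W:=G\times X$ with the diagonal action, mapping to $X$ by the projection. Then $W\rightarrow X$ is isovariant (both actions are free) and, forgetting the action, it is the split cover $X\coprod X\rightarrow X$; hence it is an isovariant Nisnevich cover. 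But its quotient --- which is exactly what your quasi-inverse produces in this free case --- is $W/G\cong X\rightarrow X/G$, i.e.\ $\spec{k'}\rightarrow\spec{k}$, which is \'etale and surjective but not completely decomposed, hence not Nisnevich. So the quasi-inverse does not carry isovariant Nisnevich morphisms or covers back to Nisnevich ones, and no argument can make it do so. Be aware that this is not only a defect of your write-up: the paper's own proof dispatches this point in a single unargued clause (``\ldots it follows that the \'etale morphism $W/G\rightarrow X/G$ is also Nisnevich''), and the example above shows that this clause fails as stated, which also affects the use of the proposition in Lemma \ref{exsplittinseq}; indeed \v{C}ech descent for the cover $W\rightarrow X$ amounts to Galois descent for $k'/k$ in $K$-theory, which is known to fail integrally. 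Your instinct that the descent of the residue-field condition is the crux was exactly right; it is not a routine verification but the precise point where the statement, in this formulation, breaks.
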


\begin{proof}
  To proof the statement we construct similar as in \cite{MR860673} the inverse of the above functor. So let $W\xrightarrow{f} X$ be an isovariant Nisnevich map. Because $f$ is isovariant the diagram
\begin{displaymath}
  \xymatrix{
    W\ar[r]^f \ar[d] & X \ar[d] \\
    W/G \ar[r]^{f'} & X/G
    } 
\end{displaymath}
is cartesian. Now by assumption $X/G$ is a scheme, $W/G$ is an algebraic spaces, and by descent the morphism $f'$ is \'etale. Therefore by \cite{KnAlgSpc}[Cor. 6.17] $W/G$ is a scheme and by the fact that $W\rightarrow X$ is Nisnevich and isovariant it follows that the \'etale morphism $W/G\rightarrow X/G$ is also Nisnevich.
\end{proof}

Now we consider the category $\,^GSm/S$ of $G$-scheme, smooth and of finite type over $S$, with the induced isovariant Nisnevich topology and we want to show, that this topology is in fact a cd topology in the sense of \cite{Voecdstructures}. For the basic definition and properties of cd structures we refer to loc. cit..

\begin{remark}
  Instead of $\,^GSm/S$ we could also consider $\,^GSch/S$ the category of all separated $G$-scheme of finite type over S. All the results in this section would also be valid for  $\,^GSch/S$. 
\end{remark}  

\begin{defi}
  A {\em distinguished square} in $\,^GSm/S$ is a cartesian diagram of the form
\begin{equation}\label{distingsquare}
  \xymatrix{
    U\times_X V \ar[r] \ar[d] & V \ar[d]^p \\
    U \ar[r]^i & X
    }
\end{equation}
such that $p$ is an isovariant \'etale morphism, $i$ is an invariant open embedding and $p^{-1}(X-U)\rightarrow X-U$ is an isomorphism. The set of all distinguished squares in $(\,^GSm/S)_{iso-Nis}$ defines a cd topology on $\,^GSm/S$, which we call the {\em isovariant Nisnevich cd structure}.
\end{defi}

The next proposition shows that the topology defined by this cd structure is the same as $iso-Nis$.

\begin{proposition}\label{cdtopologytheorem}
  The isovariant Nisnevich topology on $\,^GSm/S$ coincides with the
  topology defined by the isovariant Nisnevich cd structure.
\end{proposition}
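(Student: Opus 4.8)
The plan is to show that the two topologies have the same covering families by checking that each of the generating covers of one is a covering family for the other. Both topologies on $\,^GSm/S$ are generated by explicit covers: the cd topology $t_P$ is generated by the empty cover of the initial object together with the two-element families $\{U\rightarrow X,\,V\rightarrow X\}$ attached to the distinguished squares \eqref{distingsquare}, while $iso\text{-}Nis$ is generated by the isovariant Nisnevich covers. It therefore suffices to establish the two inclusions $t_P\subseteq iso\text{-}Nis$ and $iso\text{-}Nis\subseteq t_P$ at the level of generating covers.

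The inclusion $t_P\subseteq iso\text{-}Nis$ I would settle directly from the definitions. In a distinguished square \eqref{distingsquare} the morphism $i$ is an invariant open embedding, hence isovariant by the Remark following Definition \ref{defi:isovariant}, and $p$ is isovariant \'etale by hypothesis. Forgetting the $G$-action, $\{U\rightarrow X,\,V\rightarrow X\}$ consists of an open immersion together with an \'etale morphism that is an isomorphism over the reduced closed complement $X-U$; points of $U$ lift through $i$ and points of $X-U$ lift through $p$ with trivial residue field extension, so the family is a completely decomposed, i.e. Nisnevich, cover. Thus every generating cover of $t_P$ is an isovariant Nisnevich cover.

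The substance lies in the reverse inclusion: every isovariant Nisnevich cover $\{X_i\rightarrow X\}$ generates a $t_P$-covering sieve. Here I would reduce to the non-equivariant statement that the ordinary Nisnevich topology coincides with its cd structure, which is Voevodsky's theorem \cite{VoeNiscdhtopologies}. The bridge is Proposition \ref{sitesequi}: whenever a scheme-theoretic geometric quotient $\pi:X\rightarrow X/G$ exists, the equivalence $iso\text{-}Nis/X\simeq Nis/(X/G)$ carries invariant open embeddings to open immersions and isovariant \'etale morphisms to \'etale morphisms, and therefore carries the distinguished squares of our cd structure to elementary Nisnevich distinguished squares. Consequently, over any $G$-invariant open $U\subseteq X$ on which a geometric quotient exists, the restriction of the given cover is a $t_P$-cover, since the corresponding Nisnevich cover of $U/G$ lies in the Nisnevich cd topology by Voevodsky's theorem.

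To globalize I would run a Noetherian induction whose inductive invariant is the $G$-invariant closed locus over which the cover fails to admit a Nisnevich section. By Proposition \ref{genquotient} applied to $X_{\mathrm{red}}$ there is a dense $G$-invariant open $U$ carrying a geometric quotient; after shrinking $U$ the cover acquires an isovariant section over $U$ (this is where complete decomposition at the generic points is used), and over $U$ the cover is $t_P$-covering by the previous paragraph. Complete decomposition along the complement $Z=X-U$ furnishes, at the generic points of $Z$, preimages with trivial residue field extension and hence an isovariant \'etale neighbourhood $V\rightarrow X$ that is an isomorphism over $Z$; together with $U\hookrightarrow X$ this is a distinguished square in the sense of \eqref{distingsquare}. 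Completeness of the isovariant Nisnevich cd structure \cite{Voecdstructures} then reduces $t_P$-covering of the original sieve on $X$ to $t_P$-covering on the two corners, and on the corner $V$ the pulled-back cover has a strictly larger splitting locus and is disposed of by the inductive hypothesis. The main obstacle, and the place where isovariance is indispensable, is exactly the construction of these isovariant \'etale neighbourhoods compatibly with the cover: it is isovariance that permits passage to the quotient fibrewise so that the non-equivariant geometry applies, and verifying that the neighbourhoods assemble into genuine distinguished squares is the crux; once this is in place the inductive assembly is formal.
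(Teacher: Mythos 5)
Your easy direction is fine, and your idea of using Proposition \ref{sitesequi} to transport Voevodsky's non-equivariant theorem over invariant opens admitting geometric quotients is sound and is indeed the mechanism behind the paper's argument. But the globalization step has a genuine gap, at exactly the point you yourself flag as ``the crux'' and then defer. You claim that complete decomposition along $Z=X-U$ ``furnishes, at the generic points of $Z$, preimages with trivial residue field extension and hence an isovariant \'etale neighbourhood $V\rightarrow X$ that is an isomorphism over $Z$.'' This is a non sequitur: lifts at the generic points of $Z$ only produce an \'etale map which is an isomorphism over a dense \emph{open} subset of $Z$, whereas a distinguished square \eqref{distingsquare} requires $p^{-1}(X-U)\rightarrow X-U$ to be an isomorphism over the \emph{whole} reduced complement. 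When $U$ is the locus where the cover splits, such a $V$ need not exist at all: the restricted cover $p^{-1}(Z)\rightarrow Z$ itself splits only generically on $Z$, and one must stratify further before any distinguished square can be formed.

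This further stratification is precisely what the paper supplies and your sketch lacks: an \emph{equivariant splitting sequence} $\emptyset=Z_{n+1}\hookrightarrow Z_n\hookrightarrow\dots\hookrightarrow Z_0=X$ (Lemma \ref{exsplittinseq}, proved by iterating your quotient argument under Noetherian induction), followed by induction on its length $n$, with the distinguished square formed at the \emph{innermost} stratum $Z_n$, not at the outermost complement $Z_1$ as in your sketch. Over $Z_n$ the cover has an honest equivariant section $s$ defined on all of $Z_n$; since $p$ is \'etale one can split off its image as a clopen piece, $\tilde X\times_X Z_n=s(Z_n)\coprod W$, and set $V:=\tilde X-W$, $U:=X-Z_n$. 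Then $V\rightarrow X$ is an isomorphism over the entirety of $Z_n$ (which your $V$ is not over your $Z$), the pullback of the cover to $V$ has the tautological section, and the pullback to $U$ has a splitting sequence of length $n-1$, so induction closes the argument. Your induction runs from the outside in and therefore stalls at the very first step; reorganizing it around the splitting sequence is not a formality but is the actual content of the proposition.
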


For the proof we need the following notation and a lemma. 
\begin{defi}
  An {\em equivariant splitting sequence}  for an equivariant morphism $p:X\rightarrow Y$ between two $G$-schemes $X,Y$ is a sequence of invariant closed subschemes
$$ \emptyset=Z_{n+1}\hookrightarrow Z_n\hookrightarrow \dots \hookrightarrow Z_1\hookrightarrow Z_0=Y$$
such that for all $i=1,\dots, n$ the morphism
$$p|_{p^{-1}(Z_i-Z_{i+1})}:p^{-1}(Z_i-Z_{i+1})\rightarrow Z_i-Z_{i+1}$$
has a equivariant section.
\end{defi}

\begin{lemma}\label{exsplittinseq}
  Let $p:\tilde X\rightarrow X$ be an isovariant Nisnevich covering map. Then $p$ has an equivariant splitting sequence.  
\end{lemma}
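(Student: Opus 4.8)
The plan is to proceed by Noetherian induction on the reduced $G$-invariant closed subschemes of $X$, using Proposition~\ref{genquotient} together with the site equivalence of Proposition~\ref{sitesequi} to reduce the construction of an equivariant section over a dense open piece to the purely non-equivariant statement that an ordinary Nisnevich covering admits a section over a dense open subscheme. Since the Nisnevich notions and the formation of $X_{\mathrm{red}}$ do not interact with nilpotents, and since the closed subschemes $Z_i$ in a splitting sequence may be taken reduced, I would first replace $X$ by $X_{\mathrm{red}}$ and $\tilde X$ by $\tilde X\times_X X_{\mathrm{red}}$ (again reduced, being \'etale over a reduced base), so that we may assume $X$ reduced; a section over a reduced locally closed stratum then gives a section of the original $p$ by composing with the closed immersion into $\tilde X$.

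For the inductive step I run Noetherian induction over the poset of reduced $G$-invariant closed subschemes of $X$, which satisfies the descending chain condition because $X$ is Noetherian: assuming an equivariant splitting sequence exists for the restriction of $p$ over every strictly smaller such subscheme, I construct one for $p$ itself. By Proposition~\ref{genquotient} there is a dense open $G$-invariant $U\subseteq X$ for which the scheme-theoretic geometric quotient $\pi\colon U\to U/G$ exists. Under the equivalence of Proposition~\ref{sitesequi}, the restricted covering $p^{-1}(U)\to U$ corresponds to an ordinary Nisnevich covering $V\to U/G$ with $V=p^{-1}(U)/G$. Because a Nisnevich covering is \'etale and completely decomposed, over the generic point of each component of $U/G$ there is a point of $V$ with trivial residue field extension, and \'etaleness promotes this to a section over an open neighbourhood; hence $V\to U/G$ admits a section $s$ over a dense open $\bar W\subseteq U/G$. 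Transporting $s$ back through Proposition~\ref{sitesequi} yields an equivariant section of $p$ over the $G$-invariant open $W_0:=\pi^{-1}(\bar W)\subseteq X$.

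It remains to assemble the sequence. Set $Z_1:=(X\smallsetminus W_0)_{\mathrm{red}}$, a reduced $G$-invariant closed subscheme properly contained in $X$ (proper since $W_0$ is nonempty). The restriction $p^{-1}(Z_1)\to Z_1$ is again an isovariant Nisnevich covering: base change preserves \'etaleness and the completely decomposed property, while isovariance is preserved because, for an invariant closed $Z\hookrightarrow X$, one has $G_{p^{-1}(Z)}=G_{\tilde X}\times_X Z$ and $G_Z=G_X\times_X Z$, so the cartesian square expressing isovariance of $p$ base-changes to the one for $p^{-1}(Z_1)\to Z_1$. By the inductive hypothesis $p^{-1}(Z_1)\to Z_1$ admits an equivariant splitting sequence $\emptyset=Z_{n+1}\hookrightarrow\cdots\hookrightarrow Z_1$; prepending $Z_0=X$, with the equivariant section over $Z_0\smallsetminus Z_1=W_0$ constructed above, gives the desired equivariant splitting sequence for $p$.

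The main obstacle I anticipate is the bridge between the equivariant and non-equivariant worlds: one must be certain that the section produced downstairs on the quotient genuinely transports to an \emph{equivariant} section upstairs, which is precisely what the equivalence of sites in Proposition~\ref{sitesequi} delivers, and that the geometric quotient supplied by Proposition~\ref{genquotient} lives on a $G$-invariant \emph{dense} open, so that the complement $Z_1$ is a \emph{proper} closed subscheme and the Noetherian induction terminates. Checking that restriction to an invariant closed subscheme preserves all three properties (\'etale, Nisnevich, isovariant) is routine base-change bookkeeping, but it is what makes the induction possible at all.
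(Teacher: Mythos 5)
Your proposal is correct and follows essentially the same route as the paper's proof: Proposition \ref{genquotient} supplies an invariant dense open with a scheme-theoretic geometric quotient, Proposition \ref{sitesequi} transports the restricted cover to an ordinary Nisnevich cover of the quotient, which splits over a dense open; pulling the section back gives an equivariant splitting over an invariant open of $X$, and Noetherian induction on the reduced complement $Z_1$ finishes (your extra bookkeeping --- density, and stability of isovariance under equivariant base change --- fills in details the paper leaves implicit). The one weak point is the justification of your preliminary passage to $X_{\mathrm{red}}$: a section over the reduced top stratum does \emph{not} become a section over the possibly non-reduced stratum $X\setminus Z_1$ ``by composing with the closed immersion'' (that only yields a map out of the reduced stratum); one needs the topological invariance of \'etale morphisms, i.e.\ that sections of an \'etale map biject with sections of its pullback along $X_{\mathrm{red}}\hookrightarrow X$, together with a rigidity argument for equivariance --- a standard repair, and to be fair the paper itself applies Proposition \ref{genquotient} without addressing the reducedness hypothesis at all.
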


\begin{proof}
  By \ref{genquotient} there is an open $G$-invariant $U\subset X$ such that $U$ has a scheme-theoretical geometric quotient $U\xrightarrow{\pi} U/G$. The morphism $\tilde X_U:=\tilde X\times_XU\rightarrow U$ corresponds by \ref{sitesequi} to a usual Nisnevich covering $\tilde X_U/G\rightarrow U/G$. There is an open subset $V'\subset U/G$ such that $(\tilde X_U/G)\times_{U/G} V'\rightarrow V'$ has a splitting. Pulling $V'$ back to $X$ gives an open $G$-invariant $V:=V'\times_{U/G}U\subset U\subset X$ with an equivariant splitting of the morphism
$$ \tilde X/G\times_{U/G}V'\times_{U/G}U\cong\tilde X\times_X V \rightarrow V$$
We define $Z_1:=X-V$ with the reduced scheme structure. Now $\tilde X_{Z_1}\rightarrow Z_1$ is again Nisnevich and we can repeat the procedure. By noetherian induction this completes the proof. 
\end{proof}

The following is the equivariant version of the proof of   \cite{MR1813224}[Proposition 3.1.4] resp. \cite{VoeNiscdhtopologies}[Proposition 2.16]
\begin{proof}[Proof of \ref{cdtopologytheorem}.]
Because a distinguished square of the isovariant Nisnevich cd structure is an isovariant Nisnevich cover we only have to show that an isovariant Nisnevich cover has a refinement by a covering of the topology defined by the isovariant Nisnevich cd structure.

Without loss of generality we consider a isovariant Nisnevich covering of the simple form $p:\tilde{X}\rightarrow X$. By lemma  \ref{exsplittinseq} there is an equivariant splitting sequence 
\begin{displaymath}
  \emptyset=Z_{n+1}\hookrightarrow Z_n\hookrightarrow \dots \hookrightarrow Z_1\hookrightarrow Z_0=X
\end{displaymath}
for the map $p$. We show that there is a refinement by induction on the minimal length $n$ of the splitting sequence. If $n=0$ we have an equivariant splitting of $p:\tilde{X}\rightarrow X$ and therefore the covering coming from the distinguished square
\begin{displaymath}
  \xymatrix{
    \tilde{X}\ar[d]\ar[r] & \tilde{X} \ar[d] \\
    X \ar[r]               & X
}
\end{displaymath}
is a refinement of $\tilde{X}\rightarrow X$. Now let $n>0$. We consider an equivariant section $s$ of $f_n:\tilde{X}\times_XZ_n\rightarrow Z_n$ and because $f_n$ is \'etale we can decompose equivariantly
\begin{displaymath}
  \tilde{X}\times_X Z_n= s(Z_n)\coprod W.
\end{displaymath}
We set $V:=\tilde{X}-W$ and $U:=X-Z_n$ and get the elementary square
\begin{displaymath}
  \xymatrix{
    U\times_X V \ar[r]\ar[d] & V \ar[d] \\
    U \ar[r]                 &X
}
\end{displaymath}
The pullback of $\tilde{X}\rightarrow X$ along $V\rightarrow X$ has a section and along $U\rightarrow X$ has a splitting sequence of length $n-1$. So by induction the claim follows. 
\end{proof}

Next we want to show some properties of the isovariant Nisnevich cd structure.

\begin{proposition}
  The isovariant cd structure is complete and regular.
\end{proposition}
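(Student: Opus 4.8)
The plan is to transcribe Voevodsky's verification for the ordinary Nisnevich cd structure (\cite{Voecdstructures}, Definitions~2.3 and~2.10), checking at each step that the extra equivariance and isovariance conditions survive. Throughout I write a distinguished square in the form
\begin{displaymath}
\xymatrix{
U\times_X V \ar[r]^-{g}\ar[d] & V\ar[d]^p \\
U \ar[r]^i & X
}
\end{displaymath}
with $p$ isovariant \'etale, $i$ an invariant open embedding, and $p^{-1}(X-U)\to X-U$ an isomorphism. All the fibre products occurring below are \'etale or open over smooth $G$-schemes, hence again lie in $\,^GSm/S$, so I will not comment on this point further.

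For completeness, recall that it is enough to know that the class of distinguished squares is stable under base change along an arbitrary morphism $f:X'\to X$ of $\,^GSm/S$ (the condition on the initial object being immediate): then the honest pullback square $Q\times_X X'$ is itself distinguished over $X'$ and its covering family is exactly the base change of the family of $Q$, which is the required refinement. The three geometric legs base-change formally: open embeddings pull back to open embeddings, \'etale maps to \'etale maps, and the isomorphism over $X-U$ to an isomorphism over $f^{-1}(X-U)=X'-(U\times_X X')$. The only genuinely equivariant point is that the pulled-back map $p\times_X X'$ stays isovariant, and this follows from a stabiliser computation: a point $w$ of $V\times_X X'$ lying over $v\in V$ has image $x'$ in $X'$, and if $\gamma$ fixes $x'$ then $\gamma$ fixes $f(x')=p(v)$ by equivariance of $f$, hence $\gamma$ fixes $v$ by isovariance of $p$, hence $\gamma$ fixes $w$. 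Thus the isotropy of $w$ equals the isotropy of $x'$.

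For regularity I verify the three conditions of \cite{Voecdstructures}, Definition~2.10. The square is cartesian by definition, and $i$ is an open embedding, hence a monomorphism. The remaining condition is that the diagonal square
\begin{displaymath}
\xymatrix{
U\times_X V \ar[r]^-{g}\ar[d] & V\ar[d]^{\Delta_p} \\
(U\times_X V)\times_U(U\times_X V) \ar[r]_-{g\times g} & V\times_X V
}
\end{displaymath}
is again distinguished. Since $p$ is \'etale the diagonal $\Delta_p$ is an open immersion onto the diagonal of $V\times_X V$, which is a $G$-invariant open subscheme for the diagonal action; hence $\Delta_p$ is isovariant \'etale (an open immersion is isovariant exactly when its image is $G$-invariant, as in the Remark above). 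The map $g\times g$ is the base change of the invariant open embedding $i$ along $V\times_X V\to X$, so it is again an invariant open embedding. Finally, the complement of its image in $V\times_X V$ is $\{(v_1,v_2):p(v_1)=p(v_2)\in X-U\}$; as $p$ is an isomorphism over $X-U$ this forces $v_1=v_2$, so the complement is precisely $\Delta_p(p^{-1}(X-U))$ and $\Delta_p$ restricts to an isomorphism over it. This is the equivariant form of \cite{MR1813224}, Proposition~3.1.4.

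The routine part is the stability of \'etale maps, open embeddings and isomorphisms-over-complements under base change, which is identical to the non-equivariant case. I expect the only real obstacle to be the two equivariant inputs: the stability of isovariance under base change used for completeness, and the isovariance of the diagonal $\Delta_p$ used for regularity. Both, however, reduce to the elementary observation that the isotropy is governed by the stabilisers of points, together with the characterisation of isovariant open immersions recalled above, so no serious difficulty remains.
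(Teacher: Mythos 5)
Your proof is correct and takes essentially the same route as the paper: completeness via stability of distinguished squares under pullback (Voevodsky's Lemma 2.5), and regularity by checking that the derived diagonal square is again distinguished (the paper invokes Voevodsky's Lemma 2.11 rather than Definition 2.10 directly), using that the diagonal of the \'etale leg is a $G$-invariant open immersion, hence isovariant, and that the bottom map is a base change of the invariant open embedding. Your write-up is in fact more detailed than the paper's on the two genuinely equivariant points (isovariance of pullbacks and of the diagonal) and on the complement computation $Y\times_X Y=(B\times_A B)\cup \Delta(Y)$, which the paper merely asserts.
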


\begin{proof}
  Because the pullback of a distinguished square is again distinguished the isovariant Nisnevich cd structure is complete (see \cite{Voecdstructures}[Lemma 2.5]). Now we show regularity. By \cite{Voecdstructures}[Lemma 2.11] we only have to check the following. If
\begin{displaymath}
  \xymatrix{
    B \ar[d]\ar[r] & Y \ar[d] \\
    A \ar[r]       & X 
}
\end{displaymath}
is a distinguished square then the cartesion square
\begin{displaymath}
  \xymatrix{
    B \ar[r] \ar[d]^{\Delta_{B/A}} & Y \ar[d]^{\Delta_{Y/X}} \\
    B\times_A B \ar[r] & Y\times_X Y
}
\end{displaymath}
is also distinguished.
By \cite{ega43}[17.42] $Y\rightarrow Y\times_X Y$ is an open immersion because $Y\rightarrow X$ is unramified. Because $Y\rightarrow Y\times_X Y$ is also $G$-invariant follows that $Y\rightarrow  Y\times_X Y$ is isovariant. Because $A\rightarrow X$ is an open immersion $B\times_A B\rightarrow Y\times_X Y$ is an open immersion. As before the isovariance of $B\times_A B\rightarrow Y\times_X Y$ follows. Furthermore one shows that 
$$ Y\times_X Y=(B\times_A B) \cup Y.$$
\end{proof}

Our next goal is to show that the isovariant cd structure is bounded. For that we first introduce the isovariant natural density structure.

\begin{defi}
  An {\em increasing sequence of length n} in a topological space $X$ is a sequence of points $x_0,x_1,\dots,x_n$ of $X$ such that $x_i\neq x_{i+1}$ and $x_i\in\overline{\{x_{i+1}\}}$. For a $G$-scheme $X$ we define $D^G_d(X)$ as the class of all $G$-invariant open embeddings $j:U\hookrightarrow X$ such that for all $z\in X-U$ there is an increasing sequence $z=x_0,x_1,\dots, x_d$ in $X$ of length d.
\end{defi}

\begin{remark}
  For a $G$-scheme $X$ an open subscheme $U\hookrightarrow X$ is in $D^G_d(X)$ if and only if $U\hookrightarrow X$ is $G$-invariant and if $U\hookrightarrow X$ is in $D_d(X)$ when we forget the $G$-action.
\end{remark}
Obviously we have the following lemma.
\begin{lemma}
  The classes $D^G_d$ define a density structure on $\,^GSm/S$ in the sense of  \cite{Voecdstructures}[Definition 2.20]. This density structure is locally finite on finite dimensional $G$-schemes and the dimension of a scheme with respect to this density structure is less or equal to the usual dimension of the scheme.
\end{lemma}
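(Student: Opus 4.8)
The plan is to reduce everything to the ordinary \emph{natural density structure} $D_*$ on schemes, whose properties are established in \cite{Voecdstructures}. The preceding remark identifies $D^G_d(X)$ with the set of those members of $D_d(X)$ whose underlying open embedding is $G$-invariant, so it suffices to check that the axioms of a density structure, together with local finiteness and the dimension bound, are inherited from $D_*$ once one observes that the operations involved preserve $G$-invariance.

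First I would verify the axioms of \cite{Voecdstructures}[Definition 2.20]. Members of $D^G_d(X)$ are open embeddings, hence monomorphisms. An isomorphism of $G$-schemes is a $G$-invariant open embedding with empty complement, so the increasing-sequence condition is vacuous and it lies in every $D^G_d(X)$. The inclusion $D^G_{d+1}(X)\subseteq D^G_d(X)$ is immediate from $D_{d+1}(X)\subseteq D_d(X)$, the $G$-invariance condition being identical on both sides. For closure under composition, let $j\colon U\hookrightarrow X$ be in $D^G_d(X)$ and $j'\colon W\hookrightarrow U$ in $D^G_d(U)$; then $W\hookrightarrow X$ is again a $G$-invariant open embedding, and it lies in $D_d(X)$ by composability of $D_*$, using that an increasing sequence lying in the open subscheme $U$ is also one in $X$. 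Thus $j\circ j'\in D^G_d(X)$. No pullback-stability is required by the definition, which is consistent with the fact that the natural density structure is not stable under pullback along arbitrary morphisms (a specialization chain of length $d$ need not survive restriction to a lower-dimensional subscheme).

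For local finiteness and the dimension bound the essential elementary fact is that the maximal length of an increasing sequence of points in a (sober) space $X$ equals the Krull dimension of $X$: passing to generic points turns a strictly increasing chain of irreducible closed subsets into an increasing sequence, and conversely $x_0,\dots,x_d$ yields $\overline{\{x_0\}}\subsetneq\cdots\subsetneq\overline{\{x_d\}}$. Consequently, for a finite-dimensional $G$-scheme $X$ and any $d>\dim X$, no point of $X$ admits an increasing sequence of length $d$, so the defining condition of $D^G_d(X)$ forces $X-U=\emptyset$; that is, $D^G_d(X)$ contains only isomorphisms. This shows at once that $D^G_*$ is locally finite on finite-dimensional $G$-schemes and that the associated dimension is at most $\dim X$, and since this count depends only on the underlying space it is unaffected by the $G$-invariance requirement. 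I expect no genuine obstacle here beyond this bookkeeping; the only points that warrant care are the preservation of $G$-invariance under composition and the identification of increasing-sequence length with dimension, which is exactly why the statement can be called obvious.
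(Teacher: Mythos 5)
Your proof is correct and follows exactly the route the paper intends: the paper states this lemma without proof (prefacing it with ``Obviously''), relying on the preceding remark that identifies $D^G_d(X)$ with the $G$-invariant members of the usual $D_d(X)$, which is precisely the reduction you carry out. Your bookkeeping — inheritance of the axioms from the non-equivariant natural density structure, preservation of $G$-invariance under composition, and the identification of maximal increasing-sequence length with Krull dimension to get local finiteness and the dimension bound — is exactly the content the paper leaves implicit.
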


\begin{proposition}
  The isovariant cd structure on $\,^GSm/S$ is bounded with respect to the density structure $D^G$.
\end{proposition}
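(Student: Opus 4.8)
The plan is to verify the boundedness condition of \cite{Voecdstructures}[Definition 2.22] directly, by transcribing Voevodsky's proof that the ordinary Nisnevich cd structure is bounded (\cite{VoeNiscdhtopologies}, resp.\ \cite{MR1813224}) into the equivariant setting. Recall that boundedness requires, for every distinguished square with corners $U\times_X V$, $V$, $U$, $X$ and every choice of density data $U_0\in D^G_{d+1}(U)$, $V_0\in D^G_{d+1}(V)$, $B_0\in D^G_{d+1}(U\times_X V)$ with $d=\dim_{D^G}(X)$, the existence of a distinguished square reducing the given one whose corners lie in the classes $D^G_d$ and whose images cover $X$ up to an element of $D^G_d(X)$. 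The key structural observation, recorded in the Remark following the definition of $D^G$, is that an invariant open $U\hookrightarrow X$ lies in $D^G_d(X)$ if and only if its underlying open lies in $D_d(X)$ after forgetting the $G$-action; and by definition the underlying square of an isovariant Nisnevich distinguished square \eqref{distingsquare} is an ordinary Nisnevich distinguished square. Hence the combinatorial and dimension-theoretic content of the boundedness condition is identical to the non-equivariant case, and the only additional point is to check that every object produced in Voevodsky's construction can be taken $G$-invariant and every comparison map isovariant.

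Concretely, I would start from a distinguished square as in \eqref{distingsquare}: an invariant open embedding $i:U\hookrightarrow X$, an isovariant \'etale $p:V\to X$, and an isomorphism $p^{-1}(X-U)\xrightarrow{\sim} X-U$. Set $Z:=X-U$ with its reduced (automatically $G$-invariant) structure. Because $p$ is isovariant the isomorphism over $Z$ is an isomorphism of $G$-schemes, so $\tilde Z:=p^{-1}(Z)\subset V$ is an invariant closed subscheme mapping $G$-equivariantly isomorphically onto $Z$, exactly the kind of equivariant splitting exploited in Lemma~\ref{exsplittinseq}. Following Voevodsky, I would then shrink $V$ and $U$ inside the given density data by removing invariant closed loci of one higher codimension, producing a reducing distinguished square whose corners lie in $D^G_d$ of the respective objects and whose images cover $X$ outside a closed set of codimension $\geq d$. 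Every operation involved---passing to $Z=X-U$, forming preimages $p^{-1}(-)$, taking closures of invariant subsets, and intersecting invariant opens---commutes with the $G$-action, so invariance is preserved throughout; and the restriction of the isovariant \'etale map $p$ over any invariant locally closed subset is again isovariant, since the cartesian condition on isotropy groups in Definition~\ref{defi:isovariant} is stable under base change.

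The main obstacle is therefore not the dimension count, which is literally Voevodsky's because $D^G$ only sees the underlying topological space, but the verification that the sections used in the reduction can be chosen equivariantly. This is precisely where isovariance is needed: the equivariant splitting of $p$ over the strata, guaranteed by Lemma~\ref{exsplittinseq} and the isovariance of $p$, lets one keep the decomposition $\tilde X\times_X Z = s(Z)\sqcup W$ from the proof of Proposition~\ref{cdtopologytheorem} invariant and replace any non-equivariant choice by an equivariant one. An alternative, which I would keep in reserve, is to reduce to the non-equivariant statement on quotients: on the dense invariant open where a scheme-theoretic geometric quotient exists (Proposition~\ref{genquotient}) one transports the square and the density data through the equivalence of Proposition~\ref{sitesequi}, applies Voevodsky's boundedness on $X/G$, and then climbs back up by noetherian induction over the closed complement as in Lemma~\ref{exsplittinseq}. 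Since $D^G$ is intrinsic to $X$ rather than to $X/G$, I expect the direct transcription to be the cleaner route, with the quotient argument serving only to supply the needed equivariant sections.
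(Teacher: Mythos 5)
Your opening strategy---transcribe Voevodsky's boundedness proof into the equivariant setting, observing that $D^G_d$ is just $D_d$ plus $G$-invariance and that all the constructions involved (images, closures, complements, restrictions) preserve invariance and isovariance---is exactly the route the paper takes, and your second paragraph correctly lists the closure properties that make this work. However, two defects keep the proposal from being a proof rather than a plan. First, you misstate the condition to be verified. Reducibility of a distinguished square \eqref{distingsquare} with corners $W=U\times_X V$, $V$, $U$, $X$ asks: for every $d$ and every $W_0\in D^G_{d-1}(W)$, $U_0\in D^G_d(U)$, $V_0\in D^G_d(V)$, there is a distinguished square mapping to the given one whose corners factor through $W_0$, $U_0$, $V_0$ and whose fourth corner $X'\to X$ lies in $D^G_d(X)$. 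Your version puts all three data in $D^G_{d+1}$ with $d=\dim_{D^G}(X)$; this loses both the quantification over all $d$ and the index shift on the upper-left corner, and that shift is not cosmetic: the dimension count works by taking a point $x\in X-X'$, finding $x_1$ in the image of $W-W_0$ with $x\in\overline{\{x_1\}}$, extending $x_1$ to an increasing sequence of length $d-1$ (this is where $W_0\in D^G_{d-1}(W)$ and the zero-dimensionality of the fibers of $p$ enter), and prepending $x_0:=x$ to land in $D^G_d(X)$.

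Second, and more seriously, your third paragraph locates the ``main obstacle'' in choosing equivariant sections via Lemma~\ref{exsplittinseq}. This conflates the boundedness proof with the proof of Proposition~\ref{cdtopologytheorem}: Voevodsky's boundedness argument uses no sections or splitting sequences at all. The actual content, which your proposal defers to ``Voevodsky's construction'' while misdescribing what that construction is, is the explicit reducing square: after replacing $X$ by the invariant open image of $U_0\coprod V_0$ (legitimate because $i\coprod p$ is open, surjective, with zero-dimensional fibers), one sets $W':=W_0$, $U':=U$, $V':=V-\overline{j_V(W-W_0)}$ and $X':=X-\left[(X-U)\cap\overline{p(j_V(W-W_0))}\right]$, where $j_V:W\to V$ is the projection, checks that these invariant opens again form a distinguished square with the required factorizations, and then runs the increasing-sequence argument above. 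None of this requires equivariant sections; isovariance enters only to guarantee that the restricted square is again distinguished. Your reserve strategy via quotients is also not obviously workable: the density data live on $X$ itself, the quotient exists only on a dense invariant open, and reducibility does not interact well with a noetherian induction over the closed complement, which is presumably why the paper proves boundedness directly rather than through Proposition~\ref{sitesequi}.
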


Even though the proof is similar to the non equivariant version we
don't shy away from doing the proof in some details, especially
because in the proof of \cite{VoeNiscdhtopologies}[Proposition 2.10]
there are some misprints.

\begin{proof}
  Let 
  \begin{displaymath}
    \xymatrix{
      W\ar[r]^{j_Y} \ar[d] & Y \ar[d]^{p} \\
      U \ar[r]^{j} & X
      }
  \end{displaymath}
  be a distinguished square in the isovariant Nisnevich cd structure on $\,^GSm/S$ and let $W_0\in D^G_{d-1}(W), U_0\in D^G_d(U), Y_0\in D^G_d(Y)$. Then we have to find another distinguished square
\begin{displaymath}
    \xymatrix{
      W'\ar[r] \ar[d] & Y' \ar[d]^{p} \\
      U' \ar[r] & X'
      }
\end{displaymath}
and a morphism of distinguished squares
\begin{displaymath}
  \xymatrix{
            & W'\ar[rr] \ar[dd]|\hole \ar[ld]&     & Y'\ar[dd]\ar[ld] \\
       W \ar[rr] \ar[dd] & & Y\ar[dd] &            \\
            & U' \ar[rr]|\hole \ar[ld]      &     & X' \ar[ld]      \\
       U \ar[rr]       & & X
       }  
\end{displaymath}
such that $W'\rightarrow W$ factors through $W_0\rightarrow W$, $Y'\rightarrow Y$ factors through $Y_0\rightarrow Y$, $U'\rightarrow U$ factors through $U_0\rightarrow U$, and $(X'\rightarrow X)\in D^G_d(X)$. The morphism $U\coprod Y\xrightarrow{j\coprod p} X$ is open, surjective, and has zero dimensional fibers. Therefore 
$(j\coprod p) (U_0\coprod Y_0) =:X_0$ is an open $G$-invariant subset of $X$ and $X_0\in D^G_d(X)$. By base change with the morphism $X_0\rightarrow X$ we can assume that $U_0=U$ and $Y_0=Y$ (see \cite{VoeNiscdhtopologies} Lemma 2.4).

Now we take $W':=W_0$, $U':=U$, $Y':=Y-\overline{j_Y(W-W_0)}$, and 
$$X':=X-[(X-U)\cap \overline{(p\circ j_Y(W-W_0))}].$$

First one notices that $Y'$ resp. $X'$ is indeed an open $G$-invariant subset of $Y$ resp. $X$ and that this defines a distinguished square such that the required factorization properties are satisfied. The only thing left to check is whether $X'\in D^G_d(X)$. For that let $x\in X-X'$. Because $(X-U)\cap p\circ j_V(W-W_0)=\emptyset$ and \cite{VoeNiscdhtopologies}[Lemma 2.7] there is an $x_1\in p\circ j_V(W-W_0)$ with $x\in\overline{\{x_1\}}$ and $x\not= x_1$. Now because $p\circ j_V$ has zero dimensional fibers and $W_0\in D^G_{d-1}(W)$ we find an increasing sequence $x_1,x_2,\dots x_d$ in $X$. Joining $x_0:=x$ we see that $X'\in D^G_d(X)$. 
\end{proof}

\begin{corollary}\label{finitedimension} 
  The cohomological dimension of the isovariant Nisnevich topology of a finite dimensional scheme is less or equal to the dimension of the scheme.
\end{corollary}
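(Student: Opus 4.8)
The plan is to deduce the corollary formally from the three preceding propositions together with the lemma on the density structure, via Voevodsky's general machinery of cd structures. The three propositions establish that the isovariant Nisnevich cd structure is complete, regular and bounded with respect to the density structure $D^G$, and the lemma shows that the dimension of a $G$-scheme $X$ relative to $D^G$ is at most the Krull dimension $\dim X$. These are precisely the inputs required by the general bound on cohomological dimension in \cite{Voecdstructures}, so no further geometric input should be needed.

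Concretely, I would invoke the result of Voevodsky that for a complete, regular and bounded cd structure the associated topology $t$ has cohomological dimension over any object $X$ bounded above by the dimension of $X$ relative to the chosen density structure. In that framework the bound comes from the fact that boundedness guarantees that the flasque (descent) resolutions are controlled by the density structure: the descent spectral sequence for a sheaf of abelian groups on $(\,^GSm/S)_{iso-Nis}$ restricted to $X$ has $E_2$-terms that vanish in cohomological degrees exceeding the $D^G$-dimension of $X$. Combining this with the inequality furnished by the lemma immediately yields that the isovariant Nisnevich cohomology of $X$ vanishes above degree $\dim X$, which is exactly the assertion of the corollary.

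Since all the genuinely geometric work --- producing equivariant splitting sequences, checking that pullbacks of distinguished squares are distinguished, verifying the open-immersion and isovariance conditions for regularity, and carrying out the density estimate in the boundedness proof --- has already been done, the only remaining task is bookkeeping: matching the hypotheses of the cited general theorem to the properties just established, and confirming that ``cohomological dimension of the isovariant Nisnevich topology'' is precisely the invariant that Voevodsky's theorem bounds. I expect the single point requiring care to be this identification, namely ensuring that the notion of cohomological dimension in the statement (vanishing of the higher derived functors of global sections, resp.\ pushforward, on sheaves of abelian groups) coincides with the bound phrased for completely decomposable topologies in \cite{Voecdstructures}, rather than any deeper difficulty. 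Beyond this verification and the citation, no new argument is needed.
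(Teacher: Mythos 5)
Your proposal is correct and follows exactly the paper's route: the paper's entire proof is a citation of Voevodsky's theorem on cohomological dimension for complete, regular and bounded cd structures (\cite{Voecdstructures}, Theorem 2.27), applied with the boundedness and density-dimension results just established. Your additional care about matching the notion of cohomological dimension to Voevodsky's is reasonable diligence but not a divergence in method.
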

\begin{proof}
  See \cite{Voecdstructures}[Theorem 2.27]
\end{proof}


\begin{remark}
Voevodsky defined in \cite{DeligneVoe1} an equivariant topology for a
finite flat group scheme $G$. With Lemma \ref{exsplittinseq} and
\cite{DeligneVoe1}[Lemma 2] one can show that in this case the equivariant topology of \cite{DeligneVoe1} coincides with the isovariant Nisnevich topology. 
\end{remark}

\section{Descent for cd topologies}

In this section we recall shortly what is known about the descent property for complete, regular, and bounded cd structures. So let $P$ a complete, regular, and bounded cd structure on a category $\cT$. We denote by $t_p$ the topology defined by $P$. 

Let $PreShv(\cT)$ be the category of set valued presheaves on $\cT$ and $Shv_{t_p}(\cT)\subset PreShv(\cT)$ the full subcategory of sheaves on $\cT$ in the topology $t_p$. We have the natural inclusion
\begin{displaymath}
  i:Shv_{t_p}(\cT) \rightarrow PreShv(\cT)
\end{displaymath}
which has as right adjoint the sheafification functor
\begin{displaymath}
  a_{t_p}: PreShv(\cT)\rightarrow Shv_{t_p}(\cT).
\end{displaymath}

A presheaf $\cF$ is basically by definition a sheaf if and only if the natural morphism
\begin{displaymath}
  \cF\rightarrow (i\circ a_{t_p})(\cF)
\end{displaymath}
is an isomorphism. On the other hand, because the cd structure $P$ is complete, we know that a presheaf $\cF$ is a sheaf if and only if $\cF(\emptyset)=*$ and if for each distinguished square
\begin{displaymath}
  \xymatrix{
    B\ar[r]\ar[d] & Y\ar[d] \\
    A\ar[r]       & X 
    }
\end{displaymath}
the square
\begin{displaymath}
  \xymatrix{
    \cF(X)\ar[r] \ar[d] & \cF(Y) \ar[d] \\
    \cF(A)\ar[r]        & \cF(B) 
    }
\end{displaymath}
is cartesian \cite{Voecdstructures}[Lemma 2.9]. Of course the same remains true if we replace the category of presheaves by simplicial presheaves $\Delta^{op}PreShv(\cT)$ and the category of sheaves by the category of simplicial sheaves $\Delta^{op}Shv_{t_p}(\cT)$. The goal of this section is to show that a derived version of this statement is also true.

We recall the following definitions.
\begin{defi}
  A map $f:\sX\rightarrow \sY $ of simplicial presheaves is called a  {\em global (or section wise) weak equivalence} if for all $U\in\cT$ the map
\begin{displaymath}
  f(U):\sX(U)\rightarrow \sY(U)
\end{displaymath}
is a weak equivalence of simplicial sets. 

  A map $f:\sX\rightarrow \sY $ of simplicial presheaves is called a  {\em local weak equivalence} (with respect to $t_p$) if 
\begin{displaymath}
  f_*:a_{t_p}\pi_0(\sX)\rightarrow a_{t_p}\pi_0(\sY)
\end{displaymath}
is an isomorphism and if for all $U\in\cT$ and all $x\in\sX(U)$
\begin{displaymath}
  f_*:a_{t_p}\pi_n(\sX,x)\rightarrow a_{t_p}\pi_n(\sY,f(x))
\end{displaymath}
is an isomorphism
\end{defi}

Now we consider the localization of $\Delta^{op}PreShv(\cT)$ with respect to global weak equivalences and local weak equivalences and also the localization of $\Delta^{op}Shv_{t_p}(\cT)$ with respect to local weak equivalences:
\begin{eqnarray*}
\Delta^{op}PreShv(\cT)&\rightarrow& Ho((\Delta^{op}PreShv(\cT))_g)\\
\Delta^{op}PreShv(\cT)&\rightarrow& Ho((\Delta^{op}PreShv(\cT))_l)\\
   \Delta^{op}Shv_{t_p}(\cT)&\rightarrow& Ho((\Delta^{op}Shv_{t_p}(\cT))_l)
\end{eqnarray*}

  If $f:\sX\rightarrow \sY$ is a global weak equivalence of simplicial presheaves then $a_{t_p}(f): a_{t_p}(\sX)\rightarrow a_{t_p}(\sY)$ is local weak equivalence of simplicial sheaves.
Therefore $a_{t_p}$ induces a functor between the homotopy categories making the following diagram commutative.
\begin{displaymath}
  \xymatrix{
    \Delta^{op}PreShv(\cT)\ar[r]^{a_{t_p}}\ar[d] & \Delta^{op}Shv_{t_p}(\cT)\ar[d] \\
    Ho((\Delta^{op}PreShv(\cT))_g) \ar[r] &Ho((\Delta^{op}Shv_{t_p}(\cT))_l) 
} 
\end{displaymath}

Now we consider the right derived functor of
\begin{displaymath}
  i:\Delta^{op}Shv_{t_p}(\cT)\rightarrow \Delta^{op}PreShv(\cT)\rightarrow Ho((\Delta^{op}PreShv(\cT))_g).
\end{displaymath}

So we have the diagram
{\small
\begin{displaymath}
  \xymatrix{
    \Delta^{op}PreShv(\cT) \ar[r]\ar[d] &
    \Delta^{op}Shv_{t_p}(\cT)\ar[r]\ar[d] &
    \Delta^{op}PreShv(\cT)\ar[d] \\
    Ho((\Delta^{op}PreShv(\cT))_g)\ar[r]^{a_{t_p}} &
    Ho((\Delta^{op}Shv_{t_p}(\cT))_l)\ar[r]^{\bR i} &
    Ho((\Delta^{op}PreShv(\cT))_g)
    }  
\end{displaymath}}

\begin{defi}
  A simplicial presheaf $\cF$ has {\em descent with respect to $t_p$}
  if the natural map
  \begin{displaymath}
    \cF\rightarrow \bR i \circ a_{t_p}(\cF)
  \end{displaymath}
is an isomorphism in $Ho((\Delta^{op}PreShv(\cT))_g)$.
\end{defi}

\begin{defi}
  A simplicial presheaf $\sX$ is called {\em flasque} with respect to the cd structure $P$ if $\sX(\emptyset)$ is contractible and if for  each distinguished square in $P$ 
\begin{equation}\label{distsquare}
    \xymatrix{
      B\ar[r]\ar[d] & Y\ar[d]\\
      A\ar[r]       & X      
      }
\end{equation}
  
the diagram
\begin{displaymath}
  \xymatrix{
    \sX(X)\ar[d]\ar[r] & \sX(Y)\ar[d] \\
    \sX(A)\ar[r]       & \sX(B) 
    }
\end{displaymath}
is homotopy cartesian in the category of simplicial sets. 
\end{defi}

The main result of this section is the following.
\begin{proposition}\label{descenttheorem}
  A simplicial presheaf $\sX$ has decent with respect to $t_p$ if and
  only if $\sX$ is flasque with respect to $P$.
\end{proposition}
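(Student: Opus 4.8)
The plan is to prove the equivalence by appealing to the general machinery of Voevodsky's cd structures, specifically to the results established in \cite{Voecdstructures} for complete, regular, and bounded cd structures. Since we have already verified in the previous propositions and Corollary \ref{finitedimension} that the isovariant Nisnevich cd structure $P$ is complete, regular, and bounded with respect to the density structure $D^G$, the hypotheses needed to invoke the abstract descent theorem are in place. The core of the argument is therefore to reduce the statement to \cite{Voecdstructures}[Theorem 2.33] (or the corresponding comparison statement), which asserts precisely that for a complete, regular, bounded cd structure, a simplicial presheaf is flasque if and only if it has descent in the associated topology $t_p$.

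First I would recall the characterization of $\bR i$: the right derived functor of $i$ applied to $a_{t_p}(\sX)$ is computed by taking a fibrant replacement of $a_{t_p}(\sX)$ in the local model structure on $\Delta^{op}Shv_{t_p}(\cT)$ and viewing it as a simplicial presheaf. The descent condition $\sX \xrightarrow{\sim} \bR i \circ a_{t_p}(\sX)$ then says that $\sX$ is globally weakly equivalent to a locally fibrant object, i.e.\ that $\sX$ is itself local-injective-fibrant up to global weak equivalence. The essential step is to identify the class of simplicial presheaves satisfying descent with the class of flasque presheaves. One direction is comparatively direct: if $\sX$ has descent, then because a fibrant object in the local model structure takes distinguished squares to homotopy cartesian squares (this is the content of completeness together with the sheaf characterization recalled just before the statement), and since $\sX$ is globally equivalent to such a fibrant object, $\sX$ must itself send distinguished squares to homotopy cartesian squares and have $\sX(\emptyset)$ contractible; hence $\sX$ is flasque.

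The harder direction, and the step I expect to be the main obstacle, is to show that flasqueness implies descent. Here one cannot argue square by square in isolation; one must use the boundedness of the cd structure in an essential way to run a Postnikov-type or dimension-induction argument. Concretely, boundedness with respect to $D^G$ produces, for each distinguished square and each choice of dense opens in the density structure, a refining distinguished square of strictly smaller density dimension (this is exactly the content of the boundedness proposition above). Following Voevodsky's proof in \cite{Voecdstructures}[Section 3] and the formulation in \cite{Blander}, the plan is to show that if $\sX$ is flasque then the canonical map to its fibrant replacement induces an isomorphism on homotopy sheaves, and then to upgrade this to a global weak equivalence by Noetherian induction on the density dimension, using the locally finite property of $D^G$ recorded in the lemma above to guarantee the induction terminates.

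Finally, I would assemble these two implications into the stated equivalence. Because the proof is a faithful transcription of the non-equivariant argument — the only inputs are completeness, regularity, and boundedness of $P$, all of which we have verified in the equivariant setting — the cleanest presentation is to cite the abstract theorem directly and note that our cd structure satisfies its hypotheses. I would therefore conclude by remarking that the equivalence follows from \cite{Voecdstructures}[Theorem 2.33] applied to the isovariant Nisnevich cd structure, with the boundedness input supplied by the preceding proposition; the only genuine verification specific to this paper is the establishment of completeness, regularity, and boundedness, which has already been carried out.
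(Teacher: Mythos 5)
Your proposal is correct and is essentially the paper's own argument: the paper likewise proves both directions by taking a fibrant resolution of $a_{t_p}(\sX)$ in Blander's local projective model structure on sheaves, showing that such fibrant objects are flasque (which gives the ``only if'' direction), and for the ``if'' direction invoking the boundedness-based result that a local weak equivalence between flasque objects is a global (sectionwise) weak equivalence --- exactly the dimension-induction upgrade you describe. Two small corrections: the relevant citation is \cite{Voecdstructures}[Lemma 3.5] together with \cite{Blander}, not a ``Theorem 2.33'' (Section 2 of Voevodsky's paper concerns cd structures themselves, not descent for simplicial presheaves), and the proposition is stated for an arbitrary complete, regular, bounded cd structure $P$ on a category $\cT$, so the verification that the isovariant Nisnevich cd structure satisfies these hypotheses is input from the previous section rather than part of this proof.
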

Before we proof the proposition we show the following lemma.
\begin{lemma}\label{fibrantflasque}
  Let $\sX'$ be a fibrant sheaf in the local projective model structure of \cite{Blander} on $\Delta^{op}Shv_{t_p}$. Then $\sX'$ is flasque.
\end{lemma}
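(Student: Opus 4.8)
The plan is to characterize fibrant objects in Blander's local projective model structure and then verify the homotopy cartesian condition directly from that characterization. Recall that the local projective model structure on $\Delta^{op}Shv_{t_p}$ is obtained as a Bousfield localization of the projective model structure, where the localizing maps are built from the distinguished squares of the cd structure $P$. The key fact I would use is that, because $P$ is complete, regular, and bounded, Voevodsky's machinery (as recollected in this section, and as in \cite{Voecdstructures}) tells us that the local model structure is generated by forcing precisely the squares coming from distinguished squares to become homotopy cartesian. Concretely, a fibrant object $\sX'$ in the local structure is in particular projectively fibrant (so it is section-wise a Kan complex and takes finite coproducts to products, giving $\sX'(\emptyset)$ contractible), and it is local with respect to the localizing maps.

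First I would make precise the condition that $\sX'$ is local. For each distinguished square \eqref{distsquare} there is a canonical map of representable (simplicial) sheaves $P(A,Y,B)\to X$, where $P(A,Y,B)$ denotes the homotopy pushout (the ``square'' object) $A\cup^{h}_{B}Y$; one checks this is one of the maps that is inverted in Blander's localization. Since $\sX'$ is fibrant-local, applying the derived mapping space functor $\mathrm{Map}(-,\sX')$ to this map yields a weak equivalence
\begin{displaymath}
  \mathrm{Map}(X,\sX')\xrightarrow{\sim}\mathrm{Map}(A\cup^{h}_{B}Y,\sX').
\end{displaymath}
Because $\sX'$ is projectively fibrant, $\mathrm{Map}(X,\sX')\simeq\sX'(X)$ and similarly for the representables $A$, $Y$, $B$. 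The homotopy pushout on the source is converted by $\mathrm{Map}(-,\sX')$ into a homotopy pullback, so the above equivalence rewrites exactly as the statement that the square
\begin{displaymath}
  \xymatrix{
    \sX'(X)\ar[d]\ar[r] & \sX'(Y)\ar[d] \\
    \sX'(A)\ar[r]       & \sX'(B)
    }
\end{displaymath}
is homotopy cartesian. Together with $\sX'(\emptyset)\simeq *$ this is precisely the definition of flasque.

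The main obstacle I expect is the bookkeeping needed to identify the localizing maps of Blander's construction with the homotopy-pushout maps $A\cup^{h}_{B}Y\to X$ attached to distinguished squares, and to confirm that being local with respect to these maps is equivalent to the displayed square being homotopy cartesian. This requires knowing that the generating trivial cofibrations of the localized structure are (up to the usual manipulations) built from these square-maps, which is where completeness enters, and that the derived mapping space out of a homotopy pushout is a homotopy pullback, which is a standard property of simplicial model categories but must be invoked carefully for presheaves of spaces. Once the dictionary between ``local'' and ``takes distinguished squares to homotopy pullbacks'' is set up, the proof is essentially a translation; accordingly I would cite \cite{Blander} and \cite{Voecdstructures} for the explicit description of the localization and keep the argument short.
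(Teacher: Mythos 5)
Your strategy is sound but genuinely different from the paper's. The paper's proof is more elementary: since $\sX'$ is a \emph{sheaf} and the cd structure is complete, \cite{Voecdstructures}[Lemma 2.9] gives $\sX'(\emptyset)=*$ and says that the square of sections attached to a distinguished square is \emph{strictly} cartesian; then, since a fibrant object of the local projective structure on $\Delta^{op}Shv_{t_p}(\cT)$ is in particular fibrant in the global projective structure on presheaves, each $\sX'(T)$ is a Kan complex, and the paper concludes that the strict pullback square is homotopy cartesian. Your argument is instead the Morel--Voevodsky-style localization argument (cf.\ \cite{MR1813224}, and Blander's treatment of Nisnevich descent in \cite{Blander}): locality of $\sX'$ with respect to the maps $A\cup^h_B Y\to X$, the simplicial Yoneda lemma, and the fact that $\mathrm{Map}(-,\sX')$ turns homotopy pushouts into homotopy pullbacks. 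Your route needs more machinery, but it buys something real: it never invokes the sheaf condition, so it proves the statement verbatim for fibrant objects of the local projective structure on \emph{presheaves}, and it yields homotopy cartesianness directly, whereas the paper's final step (a strictly cartesian square of Kan complexes is homotopy cartesian) in fact requires an additional hypothesis --- e.g.\ that one of the two maps into $\sX'(B)$ is a Kan fibration --- which the paper's proof does not address.

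Two points in your write-up need repair, though neither is fatal. First, Blander's local projective model structure is \emph{not constructed} as a Bousfield localization at maps attached to distinguished squares; its weak equivalences are the sheaf-theoretic local weak equivalences. The identification of that structure with the localization at the square maps (your ``key fact'') requires boundedness and is in substance Proposition \ref{descenttheorem} itself, so invoking it wholesale risks circularity inside the paper's logic. Fortunately you only need the easy direction: in any simplicial model category a fibrant object is local with respect to every weak equivalence between cofibrant objects, so the only input you must import is that $A\cup^h_B Y\to X$ \emph{is} a local weak equivalence for a complete and regular cd structure; this is a lemma of \cite{Voecdstructures} and is exactly where regularity enters, rather than through any description of generating trivial cofibrations. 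Second, your parenthetical claim that projective fibrancy makes $\sX'$ take finite coproducts to products, whence $\sX'(\emptyset)$ contractible, is false: projective fibrancy gives only sectionwise Kan complexes. The triviality of $\sX'(\emptyset)$ should instead be extracted from the sheaf condition (as the paper does), or from locality with respect to the local weak equivalence from the empty presheaf to the sheafified presheaf represented by $\emptyset$. With these corrections your argument goes through.
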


\begin{proof}
 Because $\sX'$ is a sheaf we see that $\sX'(\emptyset)=*$ and  that for each distinguished square (\ref{distsquare}) the diagram
\begin{equation}\label{homotopycatesian}
  \xymatrix{
    \sX'(X)\ar[d]\ar[r] & \sX'(Y)\ar[d] \\
    \sX'(A)\ar[r]       & \sX'(B) 
    }
\end{equation}
is cartesian in the category of simplicial sets. Now since a fibrant object in the local projective model structure on $\Delta^{op}Shv_{t_p}(\cT)$ is also fibrant as an object in $\Delta^{op}PreShv(\cT)$ with the global projective model structure we see that for any $T\in\cT$ the $\sX'(T)$ is a fibrant simplicial set. Therefore the diagram (\ref{homotopycatesian}) is also homotopy cartesian.
\end{proof}
\begin{proof}[Proof of Proposition \ref{descenttheorem}]
  We show first the "only if" part. We consider the local projective model structure of \cite{Blander} on $\Delta^{op}Shv_{t_p}(\cT)$. We choose a fibrant resolution
\begin{displaymath}
  a_{t_p}(\sX)\rightarrow \sX'
\end{displaymath}
in this model structure and we know that
\begin{displaymath}
  \bR i a_{t_p}(\sX)\simeq  i(\sX')
\end{displaymath}
in $Ho((\Delta^{op}PreShv(\cT))_g)$. Now the claim follows from lemma \ref{fibrantflasque}.

Now we show the "if" part. So we assume that $\sX$ is flasque. Let as before
\begin{displaymath}
  a_{t_p}(\sX)\rightarrow \sX'
\end{displaymath}
be a fibrant resolution in the local projective model structure. First by Lemma \ref{fibrantflasque} $\sX'$ is flasque and $\sX \rightarrow a_{t_p}(\sX)\rightarrow\sX'$ is a $t_p$-local isomorphism. Therefore by \cite{Voecdstructures}[Lemma 3.5]  this resolution is a global weak equivalence and the claim follows. 
\end{proof}

Next we want to have an analogous result of Proposition \ref{descenttheorem} for presheaves of spectra. For that we use the model structure on presheaves of spectra on a Grothendieck topology of \cite{Jardinestalbehomosimpl}. A morphism of presheaves of spectra $f:\sX\rightarrow \sY$ is called a stable equivalence if for all $n\in\N$ the morphism
\begin{displaymath}
  \tilde{\pi}^s_n(\sX)\rightarrow \tilde{\pi}^s_n(\sY)
\end{displaymath}
is an isomorphism of sheaves on $\cT$. Here we denote by $\tilde{\pi}^s_n $ the sheafification of the stable $n$-th homotopy groups. As in \cite{JardineMotivicSpectra} one can deduce from \ref{descenttheorem} the following result.
\begin{proposition}
  Let $\sX$ be a presheaf of spectra on $\cT$ and $\sX\rightarrow \sX'$ be a fibrant resolution in the stable model structure. Then the morphism $\sX(U)\rightarrow \sX'(U)$ is a stable equivalence of spectra for all $U\in\cT$ if and only if $\sX$ is stably flasque, i.e. for each distinguished square (\ref{distsquare}) the diagram 
\begin{displaymath}
  \xymatrix{
    \sX(X)\ar[d]\ar[r] & \sX(Y)\ar[d] \\
    \sX(A)\ar[r]       & \sX(B) 
    }
\end{displaymath}
is homotopy cartesian in the stable model structure on the category of spectra(cf. \cite{MR513569}). 
\end{proposition}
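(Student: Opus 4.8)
The plan is to reduce the statement for presheaves of spectra to the already-established result for simplicial presheaves, Proposition \ref{descenttheorem}. The strategy parallels the argument in \cite{JardineMotivicSpectra}: a spectrum is a sequence of pointed simplicial sets together with bonding maps, and stable homotopy data can be controlled levelwise once one passes to an $\Omega$-spectrum model. First I would recall that in the stable model structure of \cite{Jardinestalbehomosimpl}, a fibrant presheaf of spectra $\sX'$ is levelwise an $\Omega$-spectrum of fibrant simplicial presheaves, and that the stable homotopy sheaves $\tilde\pi^s_n(\sX')$ are computed from the homotopy sheaves of the individual levels. The condition that $\sX(U) \to \sX'(U)$ is a stable equivalence of spectra for all $U \in \cT$ is then equivalent to asserting that, after forming the appropriate loop-space or $\Omega$-spectrum replacement, each level satisfies the descent condition in the sense of Proposition \ref{descenttheorem}.

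The key steps, in order, are as follows. First I would show that being stably flasque for the distinguished square implies that each simplicial presheaf obtained from the levels of $\sX$ (or their iterated loop spaces) is flasque in the sense of the earlier definition, using that a homotopy cartesian square of spectra gives homotopy cartesian squares of the underlying infinite loop spaces $\Omega^\infty \Sigma^n$ and, conversely, that a spectrum-level homotopy pullback can be detected on each of these spaces. Second, I would apply Proposition \ref{descenttheorem} to conclude that the relevant simplicial presheaves have descent, i.e. the map to the fibrant replacement is a global weak equivalence on each $U$. Third, reassembling the levels, I would deduce that $\sX(U) \to \sX'(U)$ induces an isomorphism on all stable homotopy groups $\tilde\pi^s_n$, which is precisely the statement that it is a stable equivalence of spectra. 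The converse direction—that a levelwise stable equivalence to a fibrant model forces the stably flasque condition—follows because the fibrant $\sX'$ automatically satisfies the homotopy cartesian condition (being a sheaf of $\Omega$-spectra, it is levelwise flasque as in Lemma \ref{fibrantflasque}), and a stable equivalence transports this property back to $\sX$.

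The main obstacle I expect is the careful passage between the unstable descent statement, which concerns homotopy cartesian squares of simplicial sets, and the stable statement, which concerns homotopy cartesian squares of spectra. One must ensure that the homotopy cartesian condition for a square of spectra is genuinely equivalent to a family of homotopy cartesian conditions at the space level, and that these match up with the flasqueness hypothesis of Proposition \ref{descenttheorem}; this requires knowing that in a stable setting homotopy pullback and homotopy pushout squares coincide, and that forming the $\Omega$-spectrum does not disturb the squares. Provided the model structure of \cite{Jardinestalbehomosimpl} is set up so that fibrant objects are sectionwise $\Omega$-spectra and stable equivalences are detected by the sheaves $\tilde\pi^s_n$, this translation is formal, and the rest of the argument is a direct invocation of Proposition \ref{descenttheorem} level by level, exactly as indicated by the reference to \cite{JardineMotivicSpectra}.
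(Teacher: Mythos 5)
Your proposal follows exactly the route the paper itself takes: the paper offers no detailed proof, stating only that the result can be deduced from Proposition \ref{descenttheorem} as in \cite{JardineMotivicSpectra}, and your levelwise reduction---using that stably fibrant objects in the model structure of \cite{Jardinestalbehomosimpl} are sectionwise $\Omega$-spectra with fibrant levels, applying the unstable descent result level by level, and transporting flasqueness back along a sectionwise stable equivalence for the converse---is precisely that deduction spelled out. Your reconstruction is correct and, if anything, more explicit than the paper's citation-level argument.
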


\section{Equivariant algebraic K-theory}

Let $k$ be a field and $G$ be an algebraic group over $k$. We denote by $\,^GSm/k$ the category of smooth schemes over $k$ together  with an action of the group $G$. Morphisms are equivariant morphism. For $X\in\,^GSm/k$ a coherent $G$-module is a coherent $\cO_X$-module $\cF$ together with an action of $G$ on $\cF$ which is compatible with the action of $G$ on $X$. For what that means in details we refer to \cite{MR921490}[section 1.2]. We denote by $\cM_{G,X}$ the abelian category of coherent $G$-modules and by $\cP_{G,X}$ the exact category of those coherent $G$-modules which are locally free $\cO_X$-modules. Furthermore the $K$-theory spectra of these categories are respectively denoted by $G(G,X)$ and $K(G,X)$.  $K(G,X)$ is contravariant with respect to $G$-morphisms, and contravariant with respect to morphisms in $G$. $G(G,X)$ is contravariant with respect to flat $G$-maps in $X$ and homomorphisms in $G$.

We denote by $\cK_G$ the equivariant $K$-theory presheaves given by
\begin{displaymath}
  \cK_G: \,^GSm/k\rightarrow Spt; X\mapsto K(G,X)
\end{displaymath}

\begin{remark}\label{counterexample}
  We want to see that equivariant K-theory does not have descent in the topology where the coverings are just equivariant Nisnevich coverings. For that we consider the following examples. Let $G=\Z/2$ and $k$ be a field with $char(k)\not= 2$ and with trivial $\Z/2$ action. Further let $Spec(k)\coprod Spec(k)$ be the $G$-scheme where the nontrivial element of $\Z/2$  switches the two components. The morphism

\begin{displaymath}
  Spec(k)\coprod Spec(k) \rightarrow Spec(k)
\end{displaymath}
is compatible with he $\Z/2$ action, i.e. it is an equivariant Nisnevich covering. If equivariant K-theory satisfied descent with respect to the equivariant Nisnevich topology, then the diagram
{\tiny
\begin{equation}\label{counterexamplediagram}
  \xymatrix{
    K(G,Spec(k))\ar[r]\ar[d]   &   K(G,Spec(k)\coprod Spec(k))\ar[d]\\
    K(G, Spec(k)\coprod Spec(k)) \ar[r]   & K(G,(Spec(k)\coprod Spec(k))\times_{Spec(k)}(Spec(k)\coprod Spec(k)))
     }
\end{equation}}

would be homotopy cartesian. Because $G$ acts freely on $Spec(k)\coprod Spec(k)$ we can identify
{\tiny
\begin{eqnarray*}
  K_*(G, Spec(k)\coprod Spec(k)) & \simeq & K_*(Spec(k)) \\
  K_*(G, (Spec(k)\coprod Spec(k))\times_{Spec(k)}(Spec(k)\coprod Spec(k)))   & \simeq & K_*(Spec(k)\coprod Spec(k)).
\end{eqnarray*}
}

Using this we consider now the long exact sequence of homotopy groups corresponding to (\ref{counterexamplediagram}) and get:

{\tiny 
\begin{displaymath}
  \xymatrix{
    K_1(Spec(k)\coprod Spec(k)) \ar[r] \ar[d]^{\sim} & K_0(G, Spec(k)) \ar[r] \ar[d]^{\sim} & K_0(Spec(k))^2 \ar[r] \ar[d]^{\sim} & K_0(Spec(k)\coprod Spec(k)) \ar[r] \ar[d]^{\sim} & 0  \\
k^*\oplus k^* \ar[r] & \Z\oplus\Z \ar[r] & \Z\oplus\Z \ar[r] & \Z\oplus\Z \ar[r] & 0
}
\end{displaymath}
}

But this leads to a contradiction.
\end{remark}

The main theorem is the following.
\begin{theorem}\label{maintheorem}
The equivariant K-theory presheaf of spectra $\cK_G$ on $\,^GSm/k$ has the descent property with respect to the equivariant isovariant Nisnevich topology.
\end{theorem}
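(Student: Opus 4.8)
The plan is to reduce the descent statement to the flasqueness criterion of Proposition \ref{descenttheorem} (in its spectrum-valued form, the Proposition immediately following it). Since the isovariant Nisnevich cd structure on $\,^GSm/k$ has been shown to be complete, regular, and bounded, that criterion tells us that $\cK_G$ has descent with respect to $iso\text{-}Nis$ if and only if $\cK_G$ is stably flasque, i.e.\ for every distinguished square \eqref{distingsquare} the square of $K$-theory spectra
\begin{displaymath}
  \xymatrix{
    K(G,X)\ar[r]\ar[d] & K(G,V)\ar[d] \\
    K(G,U)\ar[r]       & K(G,U\times_X V)
    }
\end{displaymath}
is homotopy cartesian. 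So the whole theorem comes down to verifying this single square for an arbitrary distinguished square, where $p\colon V\to X$ is isovariant \'etale, $i\colon U\hookrightarrow X$ is an invariant open immersion, and $p^{-1}(X-U)\to X-U$ is an isomorphism.

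First I would set up the Mayer--Vietoris/localization machinery on the level of the categories $\cM_{G,-}$ of coherent $G$-modules, working with $G$-theory $G(G,-)$ rather than $K(G,-)$ directly. The key structural input is the equivariant localization sequence: for an invariant closed immersion $Z=X-U\hookrightarrow X$ with open complement $U$, one has a homotopy fibration $G(G,Z)\to G(G,X)\to G(G,U)$ coming from the devissage/localization theorem for the abelian categories of coherent $G$-modules (this is the equivariant analogue of Quillen localization, available in the setting of \cite{MR921490}). Applying this both to $Z\hookrightarrow X$ and to its pullback $p^{-1}(Z)\hookrightarrow V$, and using that $p$ is \'etale so that pullback of coherent sheaves is exact and induces a map of localization sequences, I reduce the homotopy cartesianness of the square to showing that the induced map on fibers
\begin{displaymath}
  G(G,Z)\longrightarrow G(G,p^{-1}(Z))
\end{displaymath}
is a weak equivalence. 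But the defining condition of a distinguished square is exactly that $p^{-1}(Z)\to Z$ is an \emph{isomorphism} of $G$-schemes; hence this map is an equivalence essentially by inspection. This is the standard Nisnevich excision argument, transported to the equivariant setting, and it handles $G$-theory.

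To pass from $G$-theory to the $K$-theory presheaf $\cK_G$ that the theorem actually names, I would invoke the equivariant analogue of Poincar\'e duality: on smooth $G$-schemes over $k$ the natural map $K(G,X)\to G(G,X)$ from the $K$-theory of locally free $G$-modules $\cP_{G,X}$ to the $G$-theory of all coherent $G$-modules $\cM_{G,X}$ is a weak equivalence, since every coherent $G$-module on a smooth $G$-scheme admits a finite resolution by locally free $G$-modules (again by \cite{MR921490}; this is where smoothness of the objects of $\,^GSm/k$ is used). Because the distinguished square consists of smooth schemes and all the structure maps are equivariant, this equivalence is natural across the whole square, so the homotopy cartesian square for $G(G,-)$ transfers verbatim to $\cK_G$. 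Thus $\cK_G$ is stably flasque, and the theorem follows from the spectrum-level descent criterion.

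The main obstacle, and the step I would spend the most care on, is the equivariant localization/excision input: one must ensure that the localization sequence for coherent $G$-modules really holds for an invariant closed immersion $Z\hookrightarrow X$ with $X$ smooth (so that $U$ and $Z$ stay within a category for which the theorem is formulated), and that the \'etale map $p$ genuinely induces an exact pullback compatible with the $G$-action and with the localization sequences — in particular that flatness of $p$ gives exactness and that isovariance guarantees the identification $p^{-1}(Z)\cong Z$ respects isotropy so the induced functor on $\cM_{G,-}$ is an equivalence, not merely an equivalence after forgetting the action. Everything else is formal once this excision statement is in hand.
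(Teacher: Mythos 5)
Your proposal is correct and follows essentially the same route as the paper: reduce descent to stable flasqueness via the cd-structure criterion, identify $K(G,-)$ with $G(G,-)$ on smooth $G$-schemes (Thomason's Theorem 5.7 in \cite{MR921490}), apply the equivariant localization theorem (Theorem 2.7 in loc.\ cit.) to identify the homotopy fibers of the two restriction maps with $G(G,X-U)$ and $G(G,p^{-1}(X-U))$, and conclude from the defining isomorphism $p^{-1}(X-U)\xrightarrow{\sim} X-U$ of the distinguished square. The only cosmetic difference is the order of the two reductions (you run the excision argument in $G$-theory first and transfer to $K$-theory afterwards, while the paper converts to $G$-theory at the outset), which does not change the substance of the argument.
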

\begin{proof}
By proposition \ref{descenttheorem} we have to show that for each distinguished square
\begin{equation}\label{distingueshed}
  \xymatrix{
    V\ar[r] \ar[d] & Y\ar[d]^{p} \\
    U\ar[r]^{i}    & X
    }
\end{equation}
in the isovariant Nisnevich site, the square of spectra
\begin{equation}\label{homotopysq}
  \xymatrix{
    \cK_G(Y)\ar[r] \ar[d]  & \cK_G(Y)\ar[d]  \\
    \cK_G(U) \ar[r]    & \cK_G(V)
    }
\end{equation}
is homotopy cartesian in the category of spectra. Because $X,U,Y$ and $V$ are smooth over a field and by \cite{MR921490}[Theorem 5.7] we can instead consider the digram of spectra
\begin{equation}\label{Ghomsquare}
  \xymatrix{
    G(G,X)\ar[r]\ar[d] & G(G,Y) \ar[d]\\
    G(G,U)\ar[r]       & G(G,V).
    } 
\end{equation}
Now by the localization theorem for equivariant $G$-theory \cite{MR921490}[Theorem 2.7] we can identify the homotopy fibers of the vertical morphisms of  (\ref{Ghomsquare}):
\begin{eqnarray*}
  hofib(G(G,X) \rightarrow G(G,U)) & \simeq & G(G,X-U)\\
  hofib(G(G,Y)  \rightarrow  G(G,V)) & \simeq & G(G,Y-V)
\end{eqnarray*}

By the fact that the square (\ref{distingueshed}) is distinguished the morphism
\begin{displaymath}
  p|_{Y-V}:Y-V\rightarrow X-U
\end{displaymath}
is an isomorphism. So the induced morphism between the homotopy fibers $G(G,X-U)\simeq G(G,Y-V)$ is a stably weak equivalence. Therefore the square (\ref{homotopysq}) is homotopy cartesian.
\end{proof}

\begin{remark}
The same arguments work if we replace $Spec(k)$ by a  regular scheme $S$. The reason why we restrict ourselves to the regular case is mainly that we don't have an equivariant version of the general localization sequence from \cite{thomason_trobaugh}.  
\end{remark}

\begin{corollary}\label{local-global-SS}
  Let $X$ be a smooth scheme of finite type over a field. Then there is a strongly convergent spectral sequence
\begin{displaymath}
  E_2^{p,q}=H^p_{isovar}(X, a_{isovar}\cK_q)\rightarrow K_{-p-q}(G,X)
\end{displaymath}
   called the descent spectral sequence.
\end{corollary}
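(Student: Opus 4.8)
The plan is to derive the descent spectral sequence as the standard homotopy spectral sequence associated to a fibrant resolution of the presheaf of spectra $\cK_G$ in the isovariant Nisnevich topology, using the machinery assembled in the previous two sections. First I would take a stably fibrant resolution $\cK_G \rightarrow \cK_G'$ in the Jardine stable model structure on presheaves of spectra on $\,^GSm/k$. By Theorem \ref{maintheorem}, $\cK_G$ is stably flasque, so by the spectrum-level version of Proposition \ref{descenttheorem} the map $\cK_G(X) \rightarrow \cK_G'(X)$ is a stable equivalence of spectra for every $X$. Hence the homotopy groups of the globally-fibrant object $\cK_G'(X)$ compute $K_{-p-q}(G,X)$, and it suffices to identify the homotopy spectral sequence of $\cK_G'(X)$ with the asserted one.

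Next I would invoke the general construction of the descent (hypercohomology) spectral sequence for a fibrant presheaf of spectra on a site, exactly as in \cite{Jardinestalbehomosimpl} and \cite{JardineMotivicSpectra}. Writing $\cK_q$ for the presheaf $X \mapsto K_q(G,X)$ and $a_{isovar}\cK_q$ for its sheafification in the isovariant Nisnevich topology, the Postnikov/coskeleton filtration of the fibrant model $\cK_G'$ produces a conditionally convergent spectral sequence with
\begin{displaymath}
  E_2^{p,q} = H^p_{isovar}(X, a_{isovar}\cK_q) \Longrightarrow \pi_{-p-q}\,\cK_G'(X) = K_{-p-q}(G,X),
\end{displaymath}
where the sheaf cohomology $H^p_{isovar}$ is exactly the derived-functor cohomology in the topology $t_p$ defined by the isovariant Nisnevich cd structure. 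The identification of the $E_2$-term with sheaf cohomology of the sheafified K-groups is the formal output of the Brown--Gersten/Jardine descent formalism once one knows that the relevant fibrant replacement computes hypercohomology, which is guaranteed by the model structure of \cite{Blander} used in Section 3.

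For strong convergence I would appeal to the boundedness established in the second section: by Corollary \ref{finitedimension} the cohomological dimension of the isovariant Nisnevich topology on a finite-dimensional scheme is bounded by its dimension, so $H^p_{isovar}(X, a_{isovar}\cK_q) = 0$ for $p > \dim X$. This finite vanishing range, together with the fact that the stable homotopy groups $K_q(G,X)$ vanish for $q$ sufficiently negative, collapses the conditionally convergent spectral sequence to a strongly convergent one in the sense of \cite{MR513569}; this is where Corollary \ref{finitedimension} does the essential work. The main obstacle I anticipate is precisely the convergence bookkeeping: one must check that the filtration has uniformly bounded length (so that the spectral sequence is not merely conditionally but strongly convergent and so that the associated-graded recovers the actual homotopy groups). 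The identification of the $E_2$-page is formal given Section 3, and Theorem \ref{maintheorem} supplies descent, so the only genuine point requiring care is combining the finite cohomological dimension with the connectivity of K-theory to guarantee strong convergence.
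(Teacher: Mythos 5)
Your proposal is correct and follows the same route as the paper: the paper's proof simply invokes the standard construction of the descent spectral sequence (citing Jardine's \emph{Generalized \'etale cohomology theories}, section 6.1) together with Theorem \ref{maintheorem} for existence, and Corollary \ref{finitedimension} for strong convergence, which is exactly the structure of your argument. Your additional bookkeeping on fibrant resolutions and the convergence discussion just makes explicit what the paper leaves to the cited reference.
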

\begin{proof}
  The existence follows in the usual way one gets the descent spectral sequence (cf. for example \cite{MR1437604} section 6.1) and Theorem \ref{maintheorem}. The convergence follows from Proposition \ref{finitedimension}.
\end{proof}
\begin{remark}
  From Theorem \ref{maintheorem} it follows that equivariant $K$-theory also has the descent property with respect to the isovariant Zariski topology and that the analog of Corollary \ref{local-global-SS} for the isovariant Zariski topology is true.
\end{remark}


\bibliographystyle{alpha}
\def\cprime{$'$}

\end{document}